\documentclass[12pt, a4paper]{article}

\usepackage{amssymb,amsfonts,latexsym}
\usepackage{amsmath,amsthm}
\usepackage{mathrsfs}

\usepackage{hyperref}

\usepackage[all]{xy}

\usepackage[active]{srcltx}

\usepackage{epic,eepic,eepicemu}
\usepackage[usenames]{color}

\usepackage[normalem]{ulem}

\usepackage{enumerate}

\allowdisplaybreaks

\numberwithin{equation}{section}

\definecolor{purple}{rgb}{0.49, 0.06, 0.51}

\def\R{\mathbb{R}}

\def\C{\mathbb{C}}

\def\s{\sigma}

\DeclareMathOperator{\Aut}{Aut}
\DeclareMathOperator{\Gal}{Gal}

\newcommand{\ox}{\otimes}

\newcommand{\id}{\mathrm{id}}

\DeclareMathOperator{\Sym}{Sym}

\DeclareMathOperator{\Trd}{\mathrm{Trd}}

\DeclareMathOperator{\Char}{\mathrm{char}}

\newcommand{\bbar}{\overline{\phantom{x}}}

\newtheorem{thm}{Theorem}[section]
\newtheorem{prop}[thm]{Proposition}
\newtheorem{cor}[thm]{Corollary}
\newtheorem{lemma}[thm]{Lemma}
\theoremstyle{definition}

\begin{document}
\title{Galois extensions, positive involutions and an application to unitary space-time coding}
\author{Vincent Astier and Thomas Unger}

\date{}
\maketitle

\begin{abstract}
We show that under certain conditions every maximal symmetric subfield of a central division algebra with positive unitary involution 
$(B,\tau)$
will be a Galois extension of the fixed field of $\tau$ and will ``real split'' $(B,\tau)$. As an application we show that a sufficient condition
for the existence of positive involutions on certain crossed product division algebras, considered by Berhuy in the context of unitary
space-time coding, is also necessary, proving that Berhuy's construction is optimal.
\smallskip

\noindent\textbf{Key words.} Central division algebra,
unitary involution, real splitting, positive involution, algebraic coding, unitary space-time code  
\smallskip

\noindent\textbf{2010 MSC.} Primary: 12E15; Secondary: 11T71, 16W10, 13J30
\end{abstract}

\section{Introduction}

Unitary space-time codes  have been studied in the context of non-coherent space-time coding,
used in MIMO signalling where the receiver has no knowledge of the properties of the transmission channel. 

In \cite{Berhuy-2014} Berhuy provides a systematic construction of unitary codes  based on central simple 
algebras with unitary involution. In particular, division
algebras with unitary involution result in codes that are fully diverse, a desirable property. 

The basic idea is as follows: let $k/k_0$ be a quadratic extension of number fields whose nontrivial automorphism is given by
complex conjugation $\bbar$ and let $(B,\tau)$ be a central simple $k$-algebra with unitary
$k/k_0$-involution (i.e., $\tau|_k=\bbar$). 
Let $K$ be a subfield of $\C$ that contains $k$ and with the property that it splits $B$, i.e., such that $B\ox_k K \cong 
M_n(K)$. (Such a
field always exists, cf. \cite[Cor.~IV.1.9]{B-O-2013}.) Under this isomorphism, the unitary involution $\tau\ox \bbar$ is transported to a unitary 
involution $\tau'$ on $M_n(K)$.
If $\tau'$ were to be conjugate transposition $*$ (Berhuy calls the involution $\tau$ positive definite in this case), then  
unitary matrices in $M_n(K)$ can be considered, corresponding to unitary elements in $B$
and Berhuy shows how to construct suitable codes from these. 

The problem of constructing fully diverse unitary space-time codes is thus reduced to the problem of constructing 
division algebras with positive definite unitary involution. For practical purposes such constructions need to be explicit.

We recall Berhuy's construction:

\begin{prop}[{\cite[Def.~2.3, Lemma~2.6, Ex.~3.9]{Berhuy-2014}}]\label{prop_ber}
Let $k/k_0$ be a quadratic extension of number fields whose nontrivial automorphism is given by
complex conjugation.
Consider the crossed product algebra $B=(\xi, L/k,G)$, where $L/k$ is a finite Galois extension with Galois group $G$
and $\xi$ is a $2$-cocycle of $G$ with values in $L$. 
Assume that $L\subseteq \C$ and   that complex conjugation induces an automorphism $\alpha$ of $L$.
Assume that $\alpha(\xi_{\s,\rho})\xi_{\s,\rho}=1$
for all $\s,\rho \in G$.
If 
\begin{equation}\label{ber}
\alpha \circ \s =\s \circ \alpha \quad\text{ for all } \s \in G,
\end{equation}
 then there exists a unitary  involution $\tau$ on $B$ that acts as inversion
on the generators of $B$ and such that $\tau|_L=\alpha$. Furthermore, $\tau$ is unique with these properties and is positive definite.
\end{prop}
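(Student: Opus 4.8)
The plan is to write $\tau$ down explicitly on the standard generators of the crossed product, verify it is a well-defined $k_0$-algebra anti-automorphism and an involution — the two hypotheses entering exactly at the two nontrivial relations — deduce uniqueness formally, and then obtain positive definiteness by transporting $\tau$ through the splitting of $B$ by $L$ itself. Write $B=\bigoplus_{\sigma\in G}Le_\sigma$ as usual, with $e_\sigma x=\sigma(x)e_\sigma$ for $x\in L$ and $e_\sigma e_\rho=\xi_{\sigma,\rho}e_{\sigma\rho}$; we may assume $\xi$ normalized (this only replaces $B$ by an isomorphic crossed product, and preserves both hypotheses), so that $e_1=1$, each $e_\sigma$ is a unit, and $e_\sigma^{-1}=\xi_{\sigma^{-1},\sigma}^{-1}e_{\sigma^{-1}}$. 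Since $\alpha$ extends $\bbar$ on $k$, it fixes $k_0$ pointwise and $\alpha|_k=\bbar$. Let $\tau$ be the $k_0$-linear map with $\tau|_L=\alpha$ and $\tau(e_\sigma)=e_\sigma^{-1}$, extended anti-multiplicatively, i.e.\ $\tau(\sum_\sigma x_\sigma e_\sigma)=\sum_\sigma e_\sigma^{-1}\alpha(x_\sigma)$. Legitimacy amounts to checking that the defining relations of $B$, read backwards, are respected: those internal to $L$ are, since $\alpha$ is a ring homomorphism and $L$ is commutative; applying $\tau$ to $e_\sigma x=\sigma(x)e_\sigma$ and using $e_\sigma^{-1}y=\sigma^{-1}(y)e_\sigma^{-1}$ reduces to $\sigma^{-1}\alpha\sigma=\alpha$ on $L$, which is precisely $\alpha\circ\sigma=\sigma\circ\alpha$; and applying $\tau$ to $e_\sigma e_\rho=\xi_{\sigma,\rho}e_{\sigma\rho}$ reduces to $\alpha(\xi_{\sigma,\rho})=\xi_{\sigma,\rho}^{-1}$, which is precisely $\alpha(\xi_{\sigma,\rho})\xi_{\sigma,\rho}=1$. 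So $\tau$ is a well-defined $k_0$-algebra anti-homomorphism.

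Being an anti-homomorphism, $\tau^2$ is a $k_0$-algebra endomorphism of $B$, and it is the identity on generators ($\tau^2|_L=\alpha^2=\id$ and $\tau^2(e_\sigma)=\tau(e_\sigma)^{-1}=e_\sigma$), hence $\tau^2=\id$: $\tau$ is an involution, it inverts the $e_\sigma$ and restricts to $\alpha$ on $L$ by construction, and since $\alpha|_{k_0}=\id\ne\alpha|_k$ it is a unitary $k/k_0$-involution. Uniqueness is automatic: $B$ is generated as a $k_0$-algebra by $L$ and the $e_\sigma$, and an anti-homomorphism is determined by its values on a generating set, so any involution with the stated properties equals $\tau$.

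For positive definiteness, note that $L$ is a subfield of $B$ of degree $n=\deg B$, hence splits $B$; as $L\subseteq\C$ and complex conjugation restricts to $\alpha$ on $L$, we may take $K=L$ in Berhuy's definition. An explicit splitting isomorphism $\phi\colon B\otimes_kL\xrightarrow{\sim}M_n(L)$ is obtained by letting $B\otimes_kL$ act on $B$ — regarded as a right $L$-vector space with basis $(e_\sigma)_{\sigma\in G}$ — via $(b\otimes\lambda)\cdot v=bv\lambda$; then $\phi(x\otimes\lambda)$ is diagonal with $\sigma$-entry $\sigma^{-1}(x)\lambda$, and $\phi(e_\tau\otimes\lambda)$ is monomial with $(\sigma,\tau^{-1}\sigma)$-entry $\sigma^{-1}(\xi_{\tau,\tau^{-1}\sigma})\lambda$. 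I claim $\phi$ carries the transported involution $\tau\otimes\bbar=\tau\otimes\alpha$ to conjugate transposition $X\mapsto X^{*}$ on $M_n(L)$: on the diagonal matrices $\phi(x\otimes\lambda)$ and on the scalars $\phi(1\otimes\lambda)$ this is immediate from $\alpha\circ\sigma=\sigma\circ\alpha$, while on $\phi(e_\tau\otimes1)$ one checks $\phi(e_\tau\otimes1)^{*}=\phi(e_\tau^{-1}\otimes1)=\phi\bigl((\tau\otimes\alpha)(e_\tau\otimes1)\bigr)$ using $\alpha(\xi_{\sigma,\rho})=\xi_{\sigma,\rho}^{-1}$ together with the $2$-cocycle identity $\sigma'(\xi_{\rho',\mu})\xi_{\sigma',\rho'\mu}=\xi_{\sigma'\rho',\mu}\xi_{\sigma',\rho'}$. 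As these elements generate $B\otimes_kL$, we get $\phi\circ(\tau\otimes\bbar)\circ\phi^{-1}={*}$, i.e.\ $\tau$ is positive definite.

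The only real work here is the index bookkeeping in that last computation (and in the relation check of the first paragraph); conceptually, the two hypotheses are exactly what makes $\tau$ multiplication-reversing, respectively what makes the transported cocycle unimodular so that conjugate transposition is preserved. As a cross-check, positivity can also be read off from the trace form: for $b=\sum_\sigma x_\sigma e_\sigma$ one computes $\Trd_{B/k}(\tau(b)b)=\sum_{\sigma\in G}\Tr_{L/k}(x_\sigma\alpha(x_\sigma))\in k_0$, which under the isomorphism $L\otimes_k\C\cong\C^{\,n}$ induced by $L\subseteq\C$ becomes $\sum_\sigma|z_\sigma|^2$ — manifestly positive definite — so $\tau$ is a positive involution by the standard trace-form criterion.
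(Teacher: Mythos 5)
This proposition is quoted by the paper from Berhuy's article (Def.~2.3, Lemma~2.6, Ex.~3.9 of \cite{Berhuy-2014}); the paper itself gives no proof, so there is nothing internal to compare against. Your argument is a correct and essentially complete reconstruction of the cited proof: the explicit formula $\tau(\sum_\sigma x_\sigma e_\sigma)=\sum_\sigma e_\sigma^{-1}\alpha(x_\sigma)$, the verification that the two hypotheses are exactly what make $\tau$ anti-multiplicative on the relations $e_\sigma x=\sigma(x)e_\sigma$ and $e_\sigma e_\rho=\xi_{\sigma,\rho}e_{\sigma\rho}$, the formal uniqueness, and the trace-form computation $\Trd_B(\tau(b)b)=\sum_\sigma\Tr_{L/k}(x_\sigma\alpha(x_\sigma))=\sum_{\sigma,\rho}|\rho(x_\sigma)|^2$ all check out (the last step needs both $\alpha\circ\sigma=\sigma\circ\alpha$, to turn $\sigma(\alpha(x))$ into $\overline{\sigma(x)}$, and $k_0\subseteq\R$ with its induced ordering, which is the setting in force). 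The explicit splitting via the left regular representation of $B\otimes_kL$ on $B$ as a right $L$-space is also correct; I verified the monomial-matrix identity $\phi(e_\vartheta\otimes 1)^*=\phi(e_\vartheta^{-1}\otimes 1)$ reduces, after applying $\mu$, precisely to the cocycle relation $\vartheta^{-1}(\xi_{\vartheta,\mu})\,\xi_{\vartheta^{-1},\vartheta\mu}=\xi_{\vartheta^{-1},\vartheta}$ together with $\alpha(\xi)=\xi^{-1}$.

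The one place you wave your hands is the normalization of $\xi$. It is true, but worth a line: rescaling $e_\sigma\mapsto c\,e_\sigma$ with $c=\xi_{1,1}^{-1}$ changes $\xi_{\sigma,\rho}$ to $c\,\sigma(c)c^{-1}\xi_{\sigma,\rho}$, and the unimodularity hypothesis is preserved because $\alpha(\sigma(c))\sigma(c)=\sigma(\alpha(c)c)=1$, which itself uses $\alpha\circ\sigma=\sigma\circ\alpha$ and $\alpha(\xi_{1,1})\xi_{1,1}=1$; moreover $\tau(e'_\sigma)=(e'_\sigma)^{-1}$ implies $\tau(e_\sigma)=e_\sigma^{-1}(\alpha(c)c)^{-1}=e_\sigma^{-1}$, so the ``inversion on generators'' conclusion does not depend on the normalization. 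With that remark inserted, the proof is complete, and since the paper's own definition of positivity is the trace-form criterion while Berhuy's original is the matrix one, giving both verifications is a sensible redundancy rather than a defect.
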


Berhuy also provides  examples of such algebras $B$ that are division algebras, cf. \cite[Examples~3.11, 4.8, 4.14, 4.15]{Berhuy-2014}.

In \cite{A-U-pos} we made a detailed study of positive (definite) involutions on algebras with involution. In 
Section~\ref{sec2} we consider division algebras with positive unitary involution and show that under certain conditions
every maximal symmetric subfield will be a Galois extension of the base field and will  ``real split'' the algebra with involution.

As an application we show in Section~\ref{sec3} that 
in the division algebra case, if the crossed product $B$ carries a positive definite unitary involution $\tau$, then 
$\tau|_L$ commutes with all elements of $G$ and $\tau|_L=\alpha$. As a consequence Berhuy's construction is optimal
in the sense that \eqref{ber} is both necessary and sufficient for the existence of a positive definite unitary involution 
on $B$.

\section{Galois extensions and positive involutions}\label{sec2}

In this section we use the following notation: $k$ is a field of characteristic not $2$, $B$ is a central division 
$k$-algebra of degree $n$ with unitary
involution $\tau$ and $k_0$ is the fixed field of $\tau$. Then $k/k_0$ is a quadratic extension and $\tau|_k$ is the 
unique nontrivial $k_0$-automorphism of $k$.  
Let $d\in k_0$ be such that $k=k_0(\sqrt{-d})$. Note that $\tau(\sqrt{-d})=-\sqrt{-d}$.
The 
space of $\tau$-symmetric elements of $B$ is denoted by $\Sym(B,\tau)$. 
We denote the space of orderings of $k_0$ by $X_{k_0}$. 
Note that $X_{k_0} \not=\varnothing$ implies that the characteristic of $k_0$ is zero.
If $P\in X_{k_0}$ we denote by $(k_0)_P$ a real closure of $k_0$ at $P$.
The involution $\tau$ is positive (definite) at an
ordering $P\in X_{k_0}$ if the associated involution trace form $\Trd_B(\tau(x)x)$ is positive definite at $P$,
cf. \cite[\S 4]{A-U-PS} (or \cite[Thm.~4.19]{Berhuy-2014} for algebras with unitary involution over number fields).

In \cite[Thm.~8.5, Def.~8.6, Prop.~8.8]{A-U-pos} we showed that if $\tau$ is positive at $P$,
then 

\begin{enumerate}[(a)]
\item  any maximal symmetric
subfield $K$ of $(B,\tau)$ that contains $k_0$ real splits $(B,\tau)$, i.e., $P$ extends to $K$ and
  \[(B\ox_{k_0} K, \tau\ox \id) \cong (M_n(K(\sqrt{-d})),  *),\]
  with $d\in P$ and $*$ conjugate transposition; 

\item there  exists a finite Galois extension of $k_0$ that real splits $(B,\tau)$.
\end{enumerate}

The main result in this section is that any maximal symmetric subfield  of $(B,\tau)$ that contains $k_0$ and that 
satisfies  additional assumptions will be a Galois extension of $k_0$ (that then real splits $(B,\tau)$).

\begin{lemma}\label{max|F}
Let $L_0$ be a finite field extension of $k_0$ and let $L=L_0(\sqrt{-d})$.
If $L/k$ is Galois, then $L/k_0$ is also Galois.
\end{lemma}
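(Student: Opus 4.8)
The plan is to prove normality of $L/k_0$ (separability will be immediate) by checking that every $k_0$-embedding of $L$ into an algebraic closure has image $L$. The device that makes this work is that the nontrivial $k_0$-automorphism $\iota:=\tau|_k$ of $k$ (the one sending $\sqrt{-d}\mapsto-\sqrt{-d}$) extends to an automorphism $\epsilon$ of $L$. Concretely, since $\sqrt{-d}\notin L_0$ (this holds in the situations of interest, e.g. when $L_0$ is a symmetric subfield, and some such hypothesis is genuinely needed: the tower $\Q\subseteq\Q(i)\subseteq\Q(\sqrt{1+i})$ has both steps Galois but is not Galois over $\Q$), the polynomial $x^2+d\in L_0[x]$ is the minimal polynomial of $\sqrt{-d}$ over $L_0$; its two roots $\pm\sqrt{-d}$ are distinct (as $\Char k_0\ne2$) and lie in $L$, so there is $\epsilon\in\Aut(L/L_0)$ with $\epsilon(\sqrt{-d})=-\sqrt{-d}$. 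This $\epsilon$ fixes $k_0$ pointwise, satisfies $\epsilon^2=\id$, and restricts to $\iota$ on $k$.

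First I would dispose of separability: $k/k_0$ is separable because it has degree $2$ and $\Char k_0\ne2$, and $L/k$ is separable because it is Galois; hence the finite extension $L/k_0$ is separable, and it remains only to prove it is normal. For normality, fix an algebraic closure $\Omega\supseteq L$ and let $\sigma\colon L\to\Omega$ be any $k_0$-embedding. Since $k=k_0(\sqrt{-d})$ is the splitting field of $x^2+d$ over $k_0$, the extension $k/k_0$ is normal, so $\sigma(k)=k$ and $\sigma|_k$ is either $\id_k$ or $\iota$. If $\sigma|_k=\id_k$, then $\sigma$ is a $k$-embedding of $L$ into $\Omega$, and normality of $L/k$ forces $\sigma(L)=L$. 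If $\sigma|_k=\iota$, consider $\sigma\circ\epsilon\colon L\to\Omega$: for $x\in k$ we have $(\sigma\circ\epsilon)(x)=\sigma(\iota(x))=\iota(\iota(x))=x$, so $\sigma\circ\epsilon$ is a $k$-embedding; by the previous case $(\sigma\circ\epsilon)(L)=L$, and since $\epsilon(L)=L$ this gives $\sigma(L)=L$ as well. Thus every $k_0$-embedding of $L$ into $\Omega$ maps $L$ onto $L$, so $L/k_0$ is normal, hence Galois.

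The argument has no real computational content; the only point that requires care is the construction of $\epsilon$ in the first paragraph, i.e. ensuring that the conjugation $\tau|_k$ actually extends to an automorphism of the top field $L$. This is exactly where linear disjointness of $L_0$ and $k$ over $k_0$ (equivalently $\sqrt{-d}\notin L_0$) is used, and, as the $D_4$-example above shows, it cannot be dropped. Once $\epsilon$ is in hand, the normality check via embeddings is routine. (Alternatively, one can argue group-theoretically: $\epsilon$ normalizes $\Gal(L/k)$ in $\Aut(L)$ — since each $\epsilon h\epsilon^{-1}$ again fixes $k$ pointwise — so $\langle\Gal(L/k),\epsilon\rangle$ is a group of $k_0$-automorphisms of $L$ of order $2\,[L:k]=[L:k_0]$, forcing $L/k_0$ to be Galois.)
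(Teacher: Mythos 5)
Your proof is correct, and your preliminary observation is a genuine catch: as stated, the lemma is false without the extra hypothesis $\sqrt{-d}\notin L_0$ (equivalently $[L:L_0]=2$). Taking $k_0=\Q$, $d=1$, $L_0=L=\Q(\sqrt{1+i})$ gives $L=L_0(\sqrt{-d})$ with $L/\Q(i)$ Galois but $L/\Q$ not normal, exactly as you note. The paper's own proof tacitly assumes this hypothesis, since it writes elements of $L$ uniquely as $a+b\sqrt{-d}$ (with $a,b\in L_0$; the ``$a,b\in k_0$'' in the printed proof is evidently a typo), and the hypothesis does hold in the only place the lemma is used (Proposition~\ref{Galois-2}), because there $L_0\subseteq\Sym(B,\tau)$ while $\tau(\sqrt{-d})=-\sqrt{-d}$, so $\sqrt{-d}\notin L_0$.

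As for the argument itself: the paper proves Galoisness by direct counting --- for each $f\in\Gal(L/k)$ it exhibits two $k_0$-automorphisms $f'$ and $f''$ of $L$ (in your notation $f'=f$ and $f''=f\circ\epsilon$), concluding $|\Aut(L/k_0)|=2[L:k]=[L:k_0]$. Your main argument instead verifies normality via $k_0$-embeddings, using $\epsilon$ to reduce the case $\sigma|_k=\iota$ to the case $\sigma|_k=\id_k$; this is equally valid and spares you from checking by hand that $f''$ is a ring homomorphism. Your parenthetical group-theoretic variant at the end is essentially the paper's proof, written out more carefully. Both routes turn on the same single idea: the conjugation $\iota$ of $k/k_0$ extends to an automorphism $\epsilon$ of $L$ fixing $L_0$ pointwise, and this is precisely where $\sqrt{-d}\notin L_0$ enters.
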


\begin{proof}
  For every $f \in
  \Gal(L/k)$, define $f',f'' : L \rightarrow L$
  by $f'(a+b\sqrt{-d}) = f(a)+f(b)\sqrt{-d}$ and $f''(a+b\sqrt{-d}) =
  f(a)-f(b)\sqrt{-d}$, where $a,b\in k_0$. A direct computation shows that $f,f' \in
  \Aut(L/k_0)$, so that $|\Aut(L/k_0) |= 2[L:k] =
  [L:k_0]$, proving that $L/k_0$ is Galois.
\end{proof}

\begin{prop}\label{posinvo}
 Let $P\in X_{k_0}$. The following
  statements are equivalent:
    \begin{enumerate}[$(1)$]
      \item $B$ carries a unitary involution that is positive at $P$;
      \item $d \in P$, i.e., $P$ does not extend to $k=k_0(\sqrt{-d})$. 
    \end{enumerate}
\end{prop}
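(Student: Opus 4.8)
The plan is to obtain $(1)\Rightarrow(2)$ directly from part~(a) above, and to prove $(2)\Rightarrow(1)$ by base change to a real closure followed by an approximation argument. As a preliminary remark, since $d\neq 0$ the ordering $P$ extends to $k=k_0(\sqrt{-d})$ exactly when $-d\in P$, i.e.\ exactly when $d\notin P$; thus condition~$(2)$ says precisely that $P$ does not extend to $k$. For $(1)\Rightarrow(2)$: if $B$ carries a unitary involution $\sigma$ with fixed field $k_0$ that is positive at $P$, then (since $k_0\subseteq\Sym(B,\sigma)$ and $B$ is finite-dimensional) $(B,\sigma)$ has a maximal symmetric subfield $K$ containing $k_0$, and applying part~(a) to $(B,\sigma)$ gives $d\in P$.

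For $(2)\Rightarrow(1)$, assume $d\in P$ and let $\tau$ be the given unitary involution on $B$. Put $R:=(k_0)_P$ and $B_R:=B\ox_{k_0}R$. Since $d>0$ in the real closed field $R$, the element $-d$ is not a square in $R$, so $C:=k\ox_{k_0}R=R(\sqrt{-d})$ is a quadratic extension of $R$, hence (by Artin--Schreier) algebraically closed. Therefore $B_R=B\ox_{k_0}R=B\ox_kC\cong M_n(C)$, and $\tau_R:=\tau\ox\id$ is transported to a unitary $C/R$-involution on $M_n(C)$. Conjugate transposition $*$ on $M_n(C)$ is another unitary $C/R$-involution, and it is positive at the unique ordering of $R$ since its involution trace form is a sum of squares. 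As $*\circ\tau_R$ is a $C$-algebra automorphism of $M_n(C)$, it is inner by Skolem--Noether, and after rescaling the conjugating element by Hilbert~$90$ we may write $*=\Int(u_0)\circ\tau_R$ with $u_0$ an invertible element of $\Sym(B_R,\tau_R)$; in particular $\Int(u_0)\circ\tau_R$ is positive at $P$.

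Now let $\Omega$ denote the set of invertible $u\in\Sym(B_R,\tau_R)$ for which $\Int(u)\circ\tau_R$ is positive at $P$. On the open locus of invertible elements, the Gram matrix of the involution trace form of $\Int(u)\circ\tau_R$ depends rationally, hence continuously, on $u$, and positive-definiteness of a quadratic form over $R$ is an open condition; therefore $\Omega$ is open, and it is nonempty because $u_0\in\Omega$. Since $\tau$ is $k_0$-linear one has $\Sym(B_R,\tau_R)=\Sym(B,\tau)\ox_{k_0}R$, and since $k_0$ is dense in its real closure $R$ (Sturm's theorem and bisection), $\Sym(B,\tau)$ is dense in $\Sym(B_R,\tau_R)$. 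Hence $\Omega$ meets $\Sym(B,\tau)$; any $u$ in the intersection is invertible in $B_R$, hence nonzero, hence (as $B$ is a division algebra) invertible in $B$, and $\sigma:=\Int(u)\circ\tau$ is an anti-automorphism of $B$ with $\sigma^{2}=\Int\bigl(u\,\tau(u)^{-1}\bigr)=\Int(1)=\id$ (using $\tau(u)=u$), hence an involution. It fixes $k_0$ and restricts to $\bbar$ on $k$, so it is a unitary involution, and it is positive at $P$ by the choice of $u$, which proves $(1)$.

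The external inputs used here --- $B_R\cong M_n(C)$ when $d\in P$, the description of unitary involutions on $M_n(C)$ up to inner twist, and the density of $k_0$ in $(k_0)_P$ --- are all standard, so the main point requiring care is the bookkeeping: one must check that ``positive at $P$'' is a property of the involution trace form that both survives base change to $(k_0)_P$ and is preserved under small perturbations there, and that twisting a unitary involution by $\Int(u)$ with $u$ symmetric again yields a unitary involution (this is where $\tau(u)=u$ is used). Alternatively, $(2)\Rightarrow(1)$ can be recast in terms of the signature of rank-one hermitian forms over $(B,\tau)$: one is looking for a symmetric unit $u$ with $\sign_P\langle u\rangle=n$ (the maximal value), which the density argument supplies and which is impossible when $d\notin P$, in agreement with $(1)\Rightarrow(2)$.
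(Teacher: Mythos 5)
Your direction $(1)\Rightarrow(2)$ is fine: it is an immediate consequence of the quoted statement (a), applied to the positive unitary involution (whose fixed field in $k$ is $k_0$, since $P\in X_{k_0}$). The genuine gap is in $(2)\Rightarrow(1)$, at the step ``$k_0$ is dense in its real closure $R=(k_0)_P$ (Sturm's theorem and bisection)''. This is false in general. Proposition~\ref{posinvo} sits in Section~\ref{sec2}, where $k_0$ is an arbitrary field admitting an ordering, and for a non-Archimedean ordering density can fail: take $k_0=\Q(t)$ with the ordering $P$ in which $t$ exceeds every rational. Then $\sqrt{t}\in (k_0)_P$, but if $f\in\Q(t)$ satisfied $|f-\sqrt{t}|<1$ one would get $|f^2-t|<3\sqrt{t}$, which is impossible because the leading term of $f^2$ has even degree in $t$ and cannot cancel the odd-degree term $t$, forcing $|f^2-t|\geq t/2$. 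Sturm plus bisection only yields isolating intervals of length $L/2^m$ with $L\in k_0$, and in a non-Archimedean ordered field $L/2^m$ need not ever drop below a prescribed positive $\ve\in k_0$. Since the openness-plus-density argument is precisely what transports the symmetric unit $u_0$ from $\Sym(B\ox_{k_0}R,\tau\ox\id)$ down to $\Sym(B,\tau)$, the construction of the positive involution collapses at this point for general $k_0$. Everything else in that direction is correct: the splitting $B\ox_k C\cong M_n(C)$ when $d\in P$, the Skolem--Noether/Hilbert~90 normalization $*=\Int(u_0)\circ\tau_R$ with $\tau_R(u_0)=u_0$, the openness of the positivity locus, the identification $\Sym(B_R,\tau_R)=\Sym(B,\tau)\ox_{k_0}R$, and the check that $\Int(u)\circ\tau$ with $\tau(u)=u$ invertible is again a unitary involution.

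Note that your argument does prove the proposition whenever $P$ is Archimedean --- in particular in the number-field setting of Section~\ref{sec3}, where $\Q\subseteq k_0$ is dense in $(k_0)_P\subseteq\R$ --- so it would suffice for the application to Berhuy's construction. But the proposition is stated for arbitrary formally real $k_0$, and the paper obtains it by citing \cite[Thm.~6.8, Prop.~8.4]{A-U-pos}, where the result is established over arbitrary base fields by means of the theory of signatures of hermitian forms rather than by a topological approximation inside the real closure. To repair your proof in full generality you would need to replace the density step by an algebraic production of a unit $u\in\Sym(B,\tau)$ whose rank-one hermitian form attains the maximal signature $n$ at $P$, without passing to limits in $(k_0)_P$.
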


\begin{proof}
  By \cite[Thm.~6.8 and Prop.~8.4]{A-U-pos}.
\end{proof}

Note that the involution in Proposition~\ref{posinvo}(1) is not necessarily $\tau$.

\begin{prop}\label{Galois-2} 
Let $L_0$ be a maximal subfield of $\Sym(B,\tau)$ containing $k_0$. Assume that
$L_0(\sqrt{-d})/k$ is Galois.
If $\tau$ is positive at some ordering $P$ of $k_0$, then $L_0/k_0$ is Galois.
\end{prop}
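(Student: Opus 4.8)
The plan is to prove that $L_0$ is stable under $\Gal(L/k_0)$, where $L:=L_0(\sqrt{-d})$; combined with Lemma~\ref{max|F} — which tells us $L/k_0$ is Galois, since $L/k$ is — this forces $\Gal(L/L_0)\trianglelefteq\Gal(L/k_0)$ by the Galois correspondence, i.e.\ that $L_0/k_0$ is Galois. Set $\alpha:=\tau|_L$; since $\tau(\sqrt{-d})=-\sqrt{-d}$, $\alpha$ is the nontrivial element of $\Gal(L/L_0)$, and since $\alpha\in\Gal(L/k_0)\setminus\Gal(L/k)$ while $\Gal(L/k)$ has index $2$ in $\Gal(L/k_0)$, one has $\Gal(L/k_0)=\langle\Gal(L/k),\alpha\rangle$. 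As $\alpha$ fixes $L_0$, the task reduces to showing that every $g\in\Gal(L/k)$ maps $L_0$ into itself.

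To this end I would invoke part~(a): since $\tau$ is positive at $P$ and $L_0$ is a maximal symmetric subfield of $(B,\tau)$ containing $k_0$, there is an isomorphism of algebras with involution $\Phi\colon(B\ox_{k_0}L_0,\tau\ox\id)\xrightarrow{\sim}(M_n(L),*)$, with $*$ conjugate transposition relative to $\alpha$, and moreover $d\in P$. In particular $B\ox_kL\cong M_n(L)$ with centre $L=k\ox_{k_0}L_0$, so $L=L_0(\sqrt{-d})\subseteq B$ splits $B$ and is therefore a maximal subfield, with $[L:k]=[L_0:k_0]=n$. Regarding $B\ox_{k_0}L_0$ as $B\ox_kL$, the subalgebra $L\ox_kL$ (the first tensor factor being the maximal subfield $L\subseteq B$) is maximal commutative in $M_n(L)$, and since $L/k$ is Galois the map $a\ox b\mapsto(g(a)b)_{g\in\Gal(L/k)}$ identifies it with $\prod_{g\in\Gal(L/k)}L$, so it is diagonalisable. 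Consequently, for $x\in L_0$ the matrix $\Phi(x\ox1)$ is conjugate in $M_n(L)$ to $\diag{(g(x))_{g\in\Gal(L/k)}}$, and so its eigenvalues are exactly the conjugates $g(x)$, $g\in\Gal(L/k)$, which lie in $L$.

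The crux is then a spectral-theorem argument. Fix $x\in L_0$. Since $x\in\Sym(B,\tau)$, the matrix $H:=\Phi(x\ox1)$ is Hermitian ($H^*=H$). By part~(a) the ordering $P$ extends to an ordering $Q$ of $L_0$; let $(L_0)_Q$ be a real closure. As $d\in P\subseteq Q$, the element $-d$ is not a square in the real closed field $(L_0)_Q$, so $L\ox_{L_0}(L_0)_Q\cong(L_0)_Q(\sqrt{-d})=(L_0)_Q(\sqrt{-1})=:C_Q$ is the algebraic closure of $(L_0)_Q$, and $\alpha$ extends to the nontrivial automorphism of $C_Q/(L_0)_Q$. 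Base-changing $H$ along $L_0\hookrightarrow(L_0)_Q$ produces a Hermitian matrix over $C_Q/(L_0)_Q$, whose eigenvalues therefore lie in $(L_0)_Q$. But these eigenvalues are the $g(x)\in L$, viewed in $C_Q$ via $L\hookrightarrow L\ox_{L_0}(L_0)_Q=C_Q$; since that embedding sends $\sqrt{-d}$ to $\sqrt{d}\,\sqrt{-1}\notin(L_0)_Q$, one has $L\cap(L_0)_Q=L_0$ inside $C_Q$. Hence $g(x)\in L_0$ for all $g\in\Gal(L/k)$ and all $x\in L_0$, so $\Gal(L/k)$ stabilises $L_0$, as required.

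The step I expect to require the most care is the second paragraph: checking that $\Phi$ carries $L\ox_kL$ to a diagonalisable maximal commutative subalgebra realizing the conjugates $g(x)$ as the eigenvalues of $\Phi(x\ox1)$, and that the transported involution is genuinely conjugate transposition relative to $\alpha$ (so that ``$H$ Hermitian'' has real content after base change to $C_Q$). Everything else is either cited — part~(a) and Lemma~\ref{max|F} — or standard: a subfield of a central division algebra splits it iff it is maximal, and a Hermitian matrix over a real closed field has its eigenvalues in that field. If one prefers to sidestep Lemma~\ref{max|F}, one may take $x$ to be a primitive element of $L_0/k_0$; then $x$ is primitive for $L/k$ as well, so the $n$ conjugates $g(x)$ ($g\in\Gal(L/k)$) are distinct and hence exhaust the roots of $\mathrm{minpoly}_{k_0}(x)$, which the argument above places in $L_0=k_0(x)$; thus $\mathrm{minpoly}_{k_0}(x)$ splits in $L_0$ and $L_0/k_0$ is Galois.
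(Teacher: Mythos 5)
Your proof is correct and follows essentially the same route as the paper: both use the real splitting from part (a) to realize a symmetric element of $L_0$ as a Hermitian matrix, invoke the spectral theorem over a real closed field to force its eigenvalues (the Galois conjugates, equivalently the roots of the minimal polynomial) into the real closure, and then intersect with $L$ to conclude they lie in $L_0$. The only difference is in the packaging of the endgame: the paper argues via the splitting in $L_0$ of the minimal polynomial of a primitive element $a$ of $L_0/k_0$ --- exactly the variant you sketch in your final remark --- rather than via stability of $L_0$ under $\Gal(L/k_0)$ and normality of $\Gal(L/L_0)$.
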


\begin{proof}
Let $a \in L_0$ be such that $L_0 = k_0(a)$. Since $\tau(a) = a$, 
we have  in $B \ox_{k_0} (k_0)_P$ that $(\tau \ox \id)(a \ox 1) = a \ox 1$. 
Since $\tau$ is  positive at $P$, we have that $d\in P$ by Proposition~\ref{posinvo}. Let $Q$ be an extension
of $P$ to $L_0$, cf. \cite[Thm.~8.5]{A-U-pos}
and note that we may thus take $(L_0)_Q=(k_0)_P$.
Consider the isomorphisms of algebras with involution
\begin{align*}
(B\ox_{k_0} (k_0)_P, \tau\ox \id) &\cong
(B\ox_{k_0} (L_0)_Q, \tau\ox \id)\\ 
& \cong (B\ox_{k_0} L_0, \tau \ox \id) \ox_{L_0} ((L_0)_Q, \id)\\
&\cong (M_n(L_0(\sqrt{-d})), *) \ox_{L_0} ((L_0)_Q, \id)\\ 
&\cong (M_n((L_0)_Q(\sqrt{-1})), *)\\
&\cong (M_n((k_0)_P(\sqrt{-1})), *),
\end{align*}
where the third isomorphism follows from (a).
Let $M_a \in M_n((k_0)_P(\sqrt{-1})) $ be the matrix
corresponding to $a \ox 1$  under these isomorphisms
and note that  ${M_a}^* = M_a$ since $\tau(a)=a$.
As in the proof
  of \cite[Prop.~8.8]{A-U-pos}, the roots of the minimal polynomial $\min_a$ 
  of $a$ over $k_0$
  are all in $(k_0)_P$. They are all also
in $L_0(\sqrt{-d})$ since $L_0(\sqrt{-d}) / k_0$ is Galois by Lemma~\ref{max|F} and $a\in L_0$. 
  Therefore all roots of $\min_a$ are in
   $(k_0)_P\cap L_0(\sqrt{-d}) =L_0$, $\min_a$ splits in $L_0$, and thus $L_0/k_0$
  is Galois (since $\Char k_0=0$).
\end{proof}

\begin{cor} \label{thecor}
With the same hypotheses as Proposition~\ref{Galois-2}, assume that $\tau$ is positive at
some ordering $P$ of
$k_0$. Let $L=L_0(\sqrt{-d})$.
Then $\tau|_{L} \circ \gamma = \gamma \circ \tau |_{L}$ for every $\gamma \in
      \Gal(L/k_0)$, and so   $\tau|_{L} \circ \gamma = \gamma \circ \tau |_{L}$ for every $\gamma \in
      \Gal(L/k)$.
\end{cor}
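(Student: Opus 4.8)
The goal is to prove that $\tau|_L$ commutes with every $\gamma \in \Gal(L/k_0)$, where $L = L_0(\sqrt{-d})$. The natural approach is to exploit the explicit description of $L/k_0$ that we already have: by Proposition~\ref{Galois-2}, $L_0/k_0$ is Galois, and writing $L_0 = k_0(a)$ with $\tau(a) = a$, the minimal polynomial $\min_a$ splits in $L_0$. Meanwhile, $\tau|_L$ is the nontrivial automorphism of $L/L_0$, since $L_0 \subseteq \Sym(B,\tau)$ and $L = L_0(\sqrt{-d})$ with $\tau(\sqrt{-d}) = -\sqrt{-d}$. So the two distinguished elements that generate $\Gal(L/k_0)$ — the various conjugations of $a$ and the conjugation of $\sqrt{-d}$ — are exactly the data we need to track.

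First I would make explicit the structure of $\Gal(L/k_0)$. By Lemma~\ref{max|F}, $L/k_0$ is Galois, and since $L_0/k$ with $k = k_0(\sqrt{-d})$ behaves just as in the proof of that lemma, every $f \in \Gal(L_0/k_0)$ lifts to $L$ by sending $\sqrt{-d} \mapsto \pm\sqrt{-d}$; together with $\tau|_L$ (which fixes $L_0$ and flips $\sqrt{-d}$), these generate $\Gal(L/k_0)$, and $\tau|_L$ is central in the obvious sense with respect to the lifts that fix $\sqrt{-d}$. So it suffices to check commutation of $\tau|_L$ with a lift $\tilde f$ of each $f \in \Gal(L_0/k_0)$. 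Pick such an $\tilde f$; both $\tilde f \circ \tau|_L$ and $\tau|_L \circ \tilde f$ are automorphisms of $L/k_0$, so it is enough to compare their action on a generator of $L/k_0$, and one can take the generator to be $a + \sqrt{-d}$ (or check on $a$ and $\sqrt{-d}$ separately). On $a$: since $\tau(a) = a$ and $a \in L_0$, we get $\tau|_L(\tilde f(a)) = \tilde f(a)$ because $\tilde f(a) \in L_0 \subseteq \Sym(B,\tau)$ — here I use that $L_0$ is $\tau$-fixed pointwise and Galois over $k_0$ so $\tilde f(a) \in L_0$ — and on the other side $\tilde f(\tau|_L(a)) = \tilde f(a)$; these agree. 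On $\sqrt{-d}$: $\tau|_L(\sqrt{-d}) = -\sqrt{-d}$ and $\tilde f$ either fixes or flips $\sqrt{-d}$, but in either case $\tilde f$ commutes with negation, so $\tilde f(\tau|_L(\sqrt{-d})) = \tilde f(-\sqrt{-d}) = -\tilde f(\sqrt{-d}) = \tau|_L(\tilde f(\sqrt{-d}))$, using $\tau|_L(\pm\sqrt{-d}) = \mp\sqrt{-d}$. Hence $\tau|_L$ commutes with all of $\Gal(L/k_0)$.

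The second assertion is then immediate: $\Gal(L/k)$ is a subgroup of $\Gal(L/k_0)$ (since $k_0 \subseteq k$ forces $\Gal(L/k) \subseteq \Gal(L/k_0)$), so commutation with every element of the larger group gives commutation with every element of the smaller one.

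The only real subtlety is making sure $\tau|_L$ is genuinely an automorphism of $L$ — i.e., that $\tau$ restricts to $L$ at all. This holds because $L = L_0(\sqrt{-d})$ with $L_0 \subseteq \Sym(B,\tau) \subseteq B$ and $\sqrt{-d} \in k \subseteq B$, so $L$ is a $\tau$-stable subfield of $B$ and $\tau|_L$ is a field automorphism of order $2$ fixing $L_0$. Everything else is the routine bookkeeping of "two $k_0$-automorphisms of $L$ that agree on a generating set are equal," applied to the generators $a$ and $\sqrt{-d}$, so there is no genuine obstacle beyond organizing that check cleanly.
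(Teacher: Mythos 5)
Your proof is correct, and it rests on the same essential input as the paper's argument, namely Proposition~\ref{Galois-2} (that $L_0/k_0$ is Galois); but you implement the conclusion differently. The paper's proof is a two-line appeal to the Galois correspondence: $L_0$ is the fixed field of the order-two subgroup $\{\id,\tau|_L\}$ of $\Gal(L/k_0)$, so $L_0/k_0$ being Galois forces that subgroup to be normal, and conjugation by any $\gamma$ must then fix its unique nontrivial element $\tau|_L$ --- which is precisely the commutation statement. You instead describe $\Gal(L/k_0)$ explicitly as the set of lifts $\tilde f^{\pm}$ of elements $f\in\Gal(L_0/k_0)$ (justified by the count $2\,|\Gal(L_0/k_0)|=[L:k_0]$) and verify commutation of $\tau|_L$ with each lift on the generators $a$ and $\sqrt{-d}$ of $L/k_0$, using that $\tilde f(a)\in L_0\subseteq\Sym(B,\tau)$ and that $\tilde f$ commutes with negation of $\sqrt{-d}$. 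Both routes are valid and both collapse without Proposition~\ref{Galois-2}; the paper's version is shorter and isolates exactly where normality enters, while yours is more elementary and makes the structure of $\Gal(L/k_0)$ and the action of $\tau|_L$ completely explicit. Your treatment of the second assertion, via $\Gal(L/k)\subseteq\Gal(L/k_0)$, coincides with the paper's.
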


\begin{proof} Observe that $\tau|_L  \in \Gal(L/k_0)$.
 Since $L_0$ is the fixed field of $\tau|_L$ and $L_0/k_0$
is Galois by Proposition~\ref{Galois-2}, the subgroup generated by $\tau|_L$ in
$\Gal(L/k_0)$,
  $\{\id, \tau|_L\}$ is normal in $\Gal(L/k_0)$. The first statement follows immediately. 
  The second statement follows from Lemma~\ref{max|F} since $\Gal(L/k) \subseteq \Gal(L/k_0)$.
\end{proof}

\section{Application to Berhuy's construction}\label{sec3}

Assume now that
$k/k_0$ is a quadratic extension of number fields whose nontrivial automorphism is given by complex conjugation $\bbar$. 
In this case  $k_0$
will be a subfield of $\R$ and we  will only consider the unique ordering on $k_0$ induced by the standard 
ordering on $\R$. Furthermore, $k=k_0(\sqrt{-d})$ with $d>0$.

\begin{prop} \label{prop3.1}
 Let $B=(\xi,L/k,G)$
be a crossed product division algebra, where $L/k$ is a finite Galois extension with Galois group $G$ and $\xi$ is a $2$-cocycle of $G$ with
values in $L$. 
Assume that $L\subseteq \C$ and   that complex conjugation induces an automorphism $\alpha$ of $L$.
Assume that $\alpha(\xi_{\s,\rho})\xi_{\s,\rho}=1$
for all $\s,\rho \in G$.
If $B$ carries a positive unitary involution $\tau$ such that $\tau|_L=\alpha$,
then 
$\tau|_L$  commutes with all elements of $G$.
\end{prop}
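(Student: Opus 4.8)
The plan is to recognise the fixed field of $\tau$ inside $L$ as a maximal symmetric subfield to which Proposition~\ref{Galois-2} and Corollary~\ref{thecor} apply, and then simply read off the conclusion, using that $\Gal(L/k)=G$ by the crossed product structure.

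First I would record the structural facts about $L$. Since $B=(\xi,L/k,G)$ is a crossed product, $L$ is a maximal subfield of $B$ with $[L:k]=|G|=n=\deg B$. By hypothesis $\tau|_L=\alpha$, the automorphism of $L$ induced by complex conjugation; in particular $\tau(L)=L$, $\tau|_k=\bbar$, and $\alpha$ has order $2$ on $L$ (it is nontrivial already on $k$). Set $L_0:=L^{\alpha}$, the fixed field of $\tau|_L$. As $k_0\subseteq k\subseteq L$ and $\alpha$ fixes $k_0$ pointwise, we have $k_0\subseteq L_0$. Because $d>0$, the element $\sqrt{-d}\in k$ is purely imaginary, so $\alpha(\sqrt{-d})=-\sqrt{-d}$; hence $\sqrt{-d}\notin L_0$, $L=L_0(\sqrt{-d})$, and (since $\tau|_k$ is nontrivial) $L_0\cap k=k_0$, so that $[L_0:k_0]=[L:k]=n$.

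Next I would verify the hypotheses of Proposition~\ref{Galois-2}. Since $\tau$ fixes $L_0$ pointwise, $L_0\subseteq\Sym(B,\tau)$; and because $B$ is a division algebra, every subfield of $B$ containing $k$ has degree at most $n$ over $k$, so any subfield of $\Sym(B,\tau)$ containing $k_0$ has degree at most $n$ over $k_0$ (take its compositum with $k$). The equality $[L_0:k_0]=n$ therefore shows that $L_0$ is a maximal subfield of $\Sym(B,\tau)$ containing $k_0$. Moreover $L_0(\sqrt{-d})/k=L/k$ is Galois by hypothesis, and $\tau$ is positive at the unique ordering $P$ of $k_0$ (the one induced by $\R$). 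Proposition~\ref{Galois-2} then gives that $L_0/k_0$ is Galois, and Corollary~\ref{thecor} yields $\tau|_L\circ\gamma=\gamma\circ\tau|_L$ for every $\gamma\in\Gal(L/k)=G$. Since $\tau|_L=\alpha$, this is precisely the assertion.

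I do not anticipate a genuine obstacle: the substance is already packaged in Proposition~\ref{Galois-2} and Corollary~\ref{thecor}, and the only point requiring care is checking that $L_0=L^{\tau}$ is a \emph{maximal} symmetric subfield containing $k_0$ — that is, the degree count $[L_0:k_0]=n$ together with the observation that $\sqrt{-d}$ is purely imaginary, which is what forces $L=L_0(\sqrt{-d})$ rather than $L=L_0$. One should also keep in mind that $\alpha\notin G$, since it acts nontrivially on $k$; it lies in $\Gal(L/k_0)\setminus\Gal(L/k)$, which is consistent with Corollary~\ref{thecor}, where commutation is first established in $\Gal(L/k_0)$ and then restricted to $G$.
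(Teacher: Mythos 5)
Your proposal is correct and follows essentially the same route as the paper: identify $L_0=L^{\alpha}$ as a maximal subfield of $\Sym(B,\tau)$ containing $k_0$ with $L=L_0(\sqrt{-d})$, then invoke Proposition~\ref{Galois-2} and Corollary~\ref{thecor} at the ordering induced by $\R$. The only difference is that you spell out the degree count and the compositum argument justifying maximality of $L_0$, which the paper leaves implicit.
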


\begin{proof}
Since $B$ is a crossed product algebra, $L$ is a maximal subfield of $B$, cf. \cite[Prop.~VI.2.1]{B-O-2013}. Let $L_0$ be the fixed field of $\alpha=\tau|_L$. Then $[L:L_0]=2$ and
$L_0$ is a maximal subfield of $\Sym(B,\tau)$. Since $k_0(\sqrt{-d})=k\subseteq L$, we have $k_0\subseteq L_0$ and $L_0(\sqrt{-d})=L$. 
The conditions of Proposition~\ref{Galois-2} are thus satisfied and so $\alpha$ commutes with all the elements of $G$ by Corollary~\ref{thecor}.
\end{proof}

As a consequence of Propositions~\ref{prop_ber} and \ref{prop3.1} we obtain the optimality of Berhuy's construction:

\begin{cor}
Let $B=(\xi,L/k,G)$ be as in Proposition~\ref{prop3.1}. Then $B$ carries a positive unitary involution $\tau$ if and only if
$\tau|_L$ commutes with the elements of $G$.
\end{cor}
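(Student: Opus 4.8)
The plan is to obtain the corollary as the conjunction of Proposition~\ref{prop_ber} and Proposition~\ref{prop3.1}, after pinning down the bookkeeping around the restriction $\tau|_L$. The standing hypotheses on $B=(\xi,L/k,G)$ carried over from Proposition~\ref{prop3.1} --- that $B$ is a division algebra, $L/k$ is finite Galois, $\xi$ is a $2$-cocycle of $G$ with values in $L$, $L\subseteq\C$, complex conjugation induces the automorphism $\alpha$ of $L$, and $\alpha(\xi_{\s,\rho})\xi_{\s,\rho}=1$ for all $\s,\rho\in G$ --- are exactly the hypotheses of \emph{both} propositions, so each applies verbatim; recall also that $k_0\subseteq\R$ carries a unique ordering, that $k=k_0(\sqrt{-d})$ with $d>0$, and that ``positive'' refers to positivity at this ordering. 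Since a unitary involution $\tau$ on $B$ has a well-defined restriction to $L$ only when it fixes $L$ setwise, and since a positive unitary involution fixing $L$ need not restrict to $\alpha$ there, I would read the equivalence in the corollary (and prove the sharper statement) as: \emph{$B$ admits a positive unitary involution $\tau$ with $\tau|_L=\alpha$ if and only if $\alpha$ commutes with every element of $G$.}

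For the direction ``$\alpha$ commutes with $G$ $\Rightarrow$ existence'', I would observe that the hypothesis is precisely condition~\eqref{ber}, so, all other hypotheses of Proposition~\ref{prop_ber} being in force, that proposition yields a unitary involution $\tau$ on $B$ acting as inversion on the generators with $\tau|_L=\alpha$, and asserts it is positive definite, hence positive. For the converse, I would feed a positive unitary involution $\tau$ with $\tau|_L=\alpha$ into Proposition~\ref{prop3.1}: since $B$ is a crossed product division algebra, $L$ is a maximal subfield of $B$, its subfield $L_0$ fixed by $\alpha=\tau|_L$ is a maximal subfield of $\Sym(B,\tau)$ containing $k_0$ with $L_0(\sqrt{-d})=L$, so the hypotheses of Proposition~\ref{prop3.1} hold and its conclusion gives that $\tau|_L=\alpha$ commutes with all elements of $G$. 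Concatenating the two directions gives the equivalence.

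I do not anticipate a substantive obstacle: the mathematical content is already contained in Propositions~\ref{prop_ber} and~\ref{prop3.1}, and both plug in directly. The only point requiring care is expository --- stating the equivalence so that the symbol $\tau$, and hence $\tau|_L$, is interpreted consistently on the two sides; the reformulation ``a positive unitary involution restricting to $\alpha$ on $L$ exists $\iff$ $\alpha$ commutes with $G$'' makes this unambiguous and is exactly what the two propositions establish. It is worth adding that, since the involution produced by Proposition~\ref{prop_ber} is unique with those properties, such a $\tau$ is unique when it exists and coincides with the involution of Berhuy's construction, so that construction is optimal.
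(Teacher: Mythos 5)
Your proposal is correct and takes essentially the same route as the paper, whose entire proof is the one-line observation that the corollary follows by combining Propositions~\ref{prop_ber} and~\ref{prop3.1}. Your explicit reformulation of the equivalence (quantifying $\tau$ properly and fixing $\tau|_L=\alpha$ on both sides) is a welcome clarification of the statement's loose wording, but it does not change the argument.
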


\def\cprime{$'$}

\textsc{School of Mathematics and Statistics, University College Dublin, Belfield,
Dublin~4, Ireland} 

\emph{E-mail address: } \texttt{vincent.astier@ucd.ie, thomas.unger@ucd.ie}

\end{document}